\let\accentvec\vec
\let\vec\accentvec
\newcommand{\vol}{\operatorname{vol}}
\DeclareMathOperator{\supp}{supp}
\newcommand{\Ccal}{\mathcal{C}}
\newcommand{\Ocal}{\mathcal{O}}
\newcommand{\Mbf}{\mathbf{M}}
\newcommand{\Sbf}{\mathbf{S}}
\newcommand{\ol}{} 
\title{Mode Poset Probability Polytopes} 
\author{Guido Mont\'ufar\inst{1} \and Johannes Rauh\inst{2}}
\institute{Max Planck Institute for Mathematics in the Sciences,\\
  Inselstra\ss e 22, 04103 Leipzig, Germany,\\
  \email{montufar@mis.mpg.de},\\
  \and
  Leibniz Universit\"at Hannover,\\
  Welfengarten 1, 30167 Hannover, Germany,\\
  \email{rauh@math.uni-hannover.de}
}
\begin{document}

\maketitle

\begin{abstract}
A mode of a probability vector is a local maximum with respect to some vicinity structure on the set of elementary events. 
The mode inequalities cut out a polytope from the simplex of probability vectors. 
Related to this is the concept of strong modes. 
A strong mode of a distribution is an elementary event that has more probability mass than all its direct neighbors together. 
The set of probability distributions with a given set of strong modes is again a polytope.
We study the vertices, the facets, and the volume of such polytopes depending on the sets of (strong) modes and the vicinity structures.
\end{abstract}

\section{Introduction}
\label{sec:introduction}

Many probability models used in practice are given in a parametric form. 
Sometimes it is useful to also have an implicit description in terms of properties that characterize the probability distributions that belong to the model. 
Such a description can be used to check whether a given probability distribution lies in the model or, otherwise, to estimate how far it lies from the model. 
For example, if a given model has a parametrization by polynomial functions, then one can show that it has a \emph{semialgebraic description}; that is, an implicit description as the solution set of polynomial equations and polynomial inequalities. 
Finding this description is known as the \emph{implicitization} problem, which in general is very hard to solve completely. 
Even if it is not possible to give a full implicit description, it may be possible to confine the model by simple polynomial equalities and inequalities. 
Here we are interested in simple confinements, in terms of natural classes of linear equalities and inequalities. 

We consider polyhedral sets of discrete probability distributions defined by prescribed sets of modes. 
A mode is a local maximum of a probability vector. 
Locality is with respect to a given a vicinity structure in the set of coordinate indices; that is, $x$ is a (strict) mode of a probability vector $p$ if and only if $p_x>p_y$, for all neighbors $y$ of~$x$. 
The vicinity structure depends on the setting.  
For probability distributions on a set of fixed-length strings, it is natural to call two strings neighbors if and only if they have Hamming distance one. 
For probability distributions on integer intervals, it is natural to call two integers neighbors if and only if they are consecutive.  
In general, a vicinity structure is just a graph with undirected edges.

Modes are important characteristics of probability distributions. 
In particular, the question whether a probability distribution underlying a statistical experiment has one
or more modes is important in applications. 
Also, many statistical models consist of ``nice'' probability distributions that are ``smooth'' in some sense. 
Such probability distributions have only a limited number of modes. 
Another motivation for studying modes was given in~\cite{montufar2012does}, where it was observed that mode patterns are a practical way to differentiate between certain parametric model classes. 

Besides from modes, we are also interested in the related concept of strong modes introduced in~\cite{montufar2012does}. 
A point $x$ is a (strict) strong mode of a probability distribution $p$ if and only if $p_x>\sum_{y\sim x}p_y$, where the sum runs over all neighbors $y$ of~$x$. 
Strong modes offer similar possibilities as modes for studying models of probability distributions. 
While strong modes are more restrictive than modes, they are easier to study. 

One observation is: Suppose that $p=\sum_{i=1}^{k}\lambda_{i}p^{i}$ is a mixture of $k$ probability distributions. 
If $p$ has a strict strong mode $x\in V$, then $x$ must be a mode of one of the distributions~$p^{i}$, because if
$p^{i}(x)\le p^{i}(y_{i})$ for some neighbor $y_{i}$ of $x$ for all~$i$, then
$\sum_{i}\lambda_{i}p^{i}(x)\le\sum_{i}\lambda_{i}p^{i}(y_{i})\le\sum_{y\sim x}\sum_{i}\lambda_{i}p^{i}(y)$. 
For example, a mixture of $k$ uni-modal distributions has at most $k$ strong modes. 
Surprisingly, the same statement is not true for modes: A mixture of $k$ product distributions may have more than $k$ modes~\cite{montufar2012does}. 
Still, the number of modes of a mixture of product distributions is bounded, although this bound is not known in general. 
As another example, in~\cite{montufar2012does} it was shown that a restricted Boltzmann machine with $m$ hidden nodes 
and $n$ visible nodes, where $m<n$ and $m$ is even, does not contain probability distributions with certain patterns of $2^{m}$ strict strong modes. 

In this paper we derive essential properties of (strong) mode polytopes, depending on the vicinity structures and the considered patterns of (strong) modes. 
In particular, we describe the vertices, the facets, and the volume of these polytopes. 
It is worth mentioning that mode probability polytopes are closely related to order and poset polytopes. 
We describe this relation at the end of Section~\ref{sec:order-polytopes}.

\bigskip
This paper is organized as follows: 
In Section~\ref{sec:polytope-modes} we study the polytopes of modes and in Section~\ref{sec:strong-modes} the polytopes of strong modes.

\begin{figure}[t]
	\centering
    \begin{tikzpicture}
      \node at (-1,1) {$G:$};
      \tikzstyle{Cgray}=[draw=black, scale = .8,circle, minimum size=4mm, thick]
      \node (X1) at (0,1) [Cgray, fill = lightgray] {$\!01\!$};
      \node (X2) at (0,0) [Cgray, fill = white] {$\!00\!$};
      \node (X3) at (1,0) [Cgray, fill = lightgray] {$\!10\!$};
      \node (X4) at (1,1) [Cgray, fill = white] {$\!11\!$};
      \draw[thick] (X1) -- (X2) -- (X3) -- (X4) -- (X1);
    \end{tikzpicture}
    \bigskip

	\includegraphics[clip=true,trim=0cm 1cm 0cm 0cm, width=\textwidth]{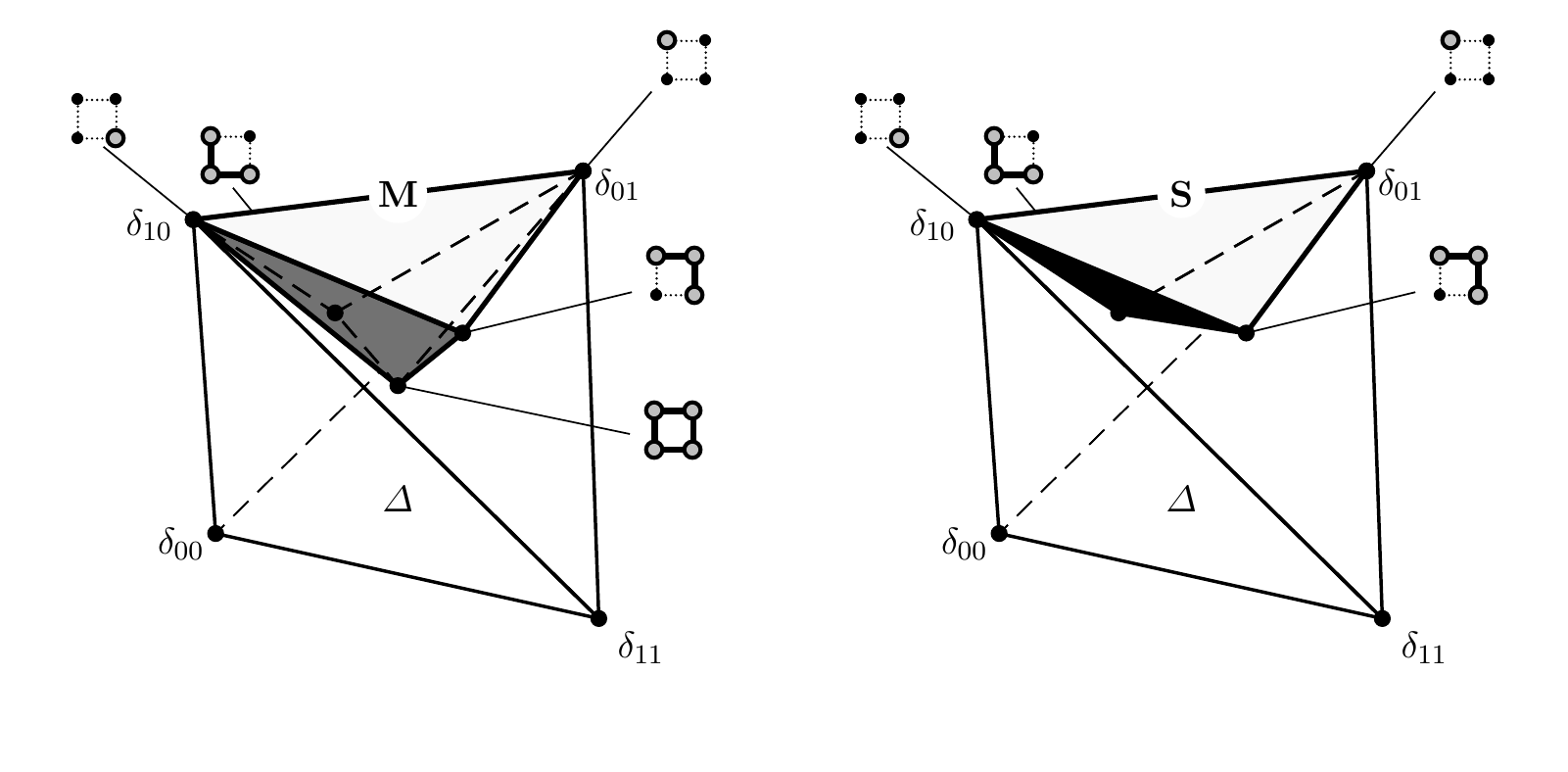}
	\caption{Above: The graph $G$ from Examples~\ref{ex:square-modes} and~\ref{ex:quadrangle-strong-modes}, with $\Ccal$ marked in gray. 
      Below: The corresponding polytopes $\ol{\mathbf{M}}(G,\Ccal)$ and $\ol{\mathbf{S}}(G,\Ccal)$. 
      Each vertex of these polytopes is a uniform distribution supported on a subset of~$G$, as explained in Propositions~\ref{prop:vertices-M} and~\ref{proposition:verticesstrongmodes}.  }
        \label{fig:quadrangle}
\end{figure}

\section{The polytope of modes}
\label{sec:polytope-modes}

We consider a finite set of elementary events $V$ and the set of probability distributions on this set, $\Delta(V)$. 
We endow $V$ with a vicinity structure described by a graph. 
Let $G=(V,E)$ be a simple graph (i.e., no multiple edges and no loops).  For any $x,y\in V$, if $(x,y)\in E$ is an edge in $G$, we write $x\sim y$.  
Since we assume that the graph is simple, $x\sim y$ implies $x\neq y$.

\begin{definition}
A point $x\in V$ is a \emph{mode} of a probability distribution $p\in \Delta(V)$ if $p_x\ge p_y$ for all~$y\sim x$. 
\end{definition}

\begin{definition}
	\label{definition:modepoly}
    Consider a subset $\Ccal\subseteq V$. 
    The \emph{polytope of $\Ccal$-modes} in $G$ is the set $\ol\Mbf(G,\Ccal)$ of all probability distributions
    $p\in\Delta(V)$ for which every $x\in\Ccal$ is a mode. 
\end{definition}
The set $\ol\Mbf(G,\Ccal)$ is always non-empty, since it contains the uniform distribution. 
It is a polytope, because it is a closed convex set defined by finitely many linear inequalities and, as a
subset of $\Delta(V)$, it is bounded. 
We are interested in the properties of this polytope, depending on $G$ and $\Ccal$. 

Recall that a set of vertices of a graph is \emph{independent}, if it does not contain two adjacent elements.  If
$\Ccal$ is not independent, then $\ol\Mbf(G,\Ccal)$ is not full-dimensional as a subset of $\Delta(V)$; that is,
$\dim\ol\Mbf(G,\Ccal)<\dim(\Delta(V))=|V|-1$.  For, if $x,y\in\Ccal$ are neighbors, then the defining equations of
$\ol\Mbf(G,\Ccal)$ imply that $p_x\ge p_y\ge p_x$; that is, any $p\in\ol\Mbf(G,\Ccal)$ satisfies~$p_x=p_y$.  In the
following we will ignore this degenerate case and assume that the set of modes is independent.

In some applications, for example those mentioned in the introduction, it is more natural to study \emph{strict modes};
i.e.~points $x\in V$ with $p_x>p_y$ for all~$y\sim x$.  A description of the set of distributions with prescribed strict
modes is easy to obtain from a description of~$\ol\Mbf(G,\Ccal)$.

\begin{example}
  \label{ex:square-modes}
  Let $G$ be a square with vertices $V=\{00,01,10,11\}$ and edges $E=\{(00,01),(00,10),(01,11),(10,11)\}$.
  The polytope $\ol\Mbf(G,\Ccal)$ for~$\Ccal=\{01,10\}$ is given in Figure~\ref{fig:quadrangle}.
\end{example}

\subsubsection{Vertices.}

We have defined $\ol\Mbf(G,\Ccal)$ by linear inequalities (H-representation). 
Next we determine its vertices (V-representation).
For any non-empty $W\subseteq V\setminus\Ccal$ and $y\in V$ write $y\sim W$ if $y\sim x$ for some~$x\in W$.
Moreover, let $N_{\Ccal}(W) = \{ y\in\Ccal: y\sim W\}$ (this is the set of declared modes which are neighbors of~$W$),
and let $e_{\Ccal}^{W}$ be the uniform distribution on $N_{\Ccal}(W)\cup W$.

\begin{proposition}
  \label{prop:vertices-M}
  \mbox{}
  \begin{enumerate}
  \item
    $\ol\Mbf(G,\Ccal)$ is the convex hull of $\{ e_{\Ccal}^{W} : \emptyset\neq W\subseteq V\setminus\Ccal\} \cup \{
    \delta_{x} : x\in\Ccal\}$, where $\delta_{x}$ denotes the point distribution concentrated on~$x$.
  \item
    For any $x\in\Ccal$, the distribution $\delta_{x}$ is a vertex of $\ol\Mbf(G,\Ccal)$.
  \item
    $e_{\Ccal}^{W}$ is a vertex of $\ol\Mbf(G,\Ccal)$ iff for any $x,y\in W$, $x\neq y$, there is a path
    $x=x_{0}\sim x_{1}\sim\dots\sim x_{r}=y$ in $G$ with $x_{0},x_{2},\dots \in W$ and $x_{1},x_{3},\dots \in 
    N_{\Ccal}(W)$. 
  \end{enumerate}
\end{proposition}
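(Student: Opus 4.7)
The plan is to establish part~(1) first and then deduce the vertex characterizations~(2) and~(3). For~(1), I would use a level-set decomposition of an arbitrary $p \in \ol\Mbf(G,\Ccal)$. Let $0 = t_0 < t_1 < \cdots < t_k$ be the distinct values of $p$ and let $A_i := \{v \in V : p_v \geq t_i\}$. Writing $u_A$ for the uniform distribution on a set $A$, the identity $p = \sum_{i=1}^{k} (t_i - t_{i-1})\, |A_i|\, u_{A_i}$ exhibits $p$ as a convex combination of uniform distributions on the~$A_i$. Each $A_i$ is \emph{$\Ccal$-closed}: if $y \in A_i$, $y \sim x$, $x \in \Ccal$, then the mode inequality gives $p_x \geq p_y \geq t_i$, so $x \in A_i$. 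Hence the task reduces to decomposing $u_A$ for a general $\Ccal$-closed set~$A$.

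For such an $A$, setting $W := A \setminus \Ccal$ and $C' := (A \cap \Ccal) \setminus N_\Ccal(W)$, the $\Ccal$-closedness yields the partition $A = W \sqcup N_\Ccal(W) \sqcup C'$. The identity
\[
u_A \;=\; \frac{|W \cup N_\Ccal(W)|}{|A|}\, e_\Ccal^W \;+\; \sum_{x \in C'} \frac{1}{|A|}\, \delta_x
\]
(with the first term omitted when $W = \emptyset$) writes $u_A$ in the required form and finishes~(1). Claim~(2) is immediate because $\delta_x$ has singleton support, so it admits no nontrivial convex decomposition inside $\Delta(V)$.

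For~(3), I would analyse when $e_\Ccal^W$ admits a nontrivial convex decomposition $e_\Ccal^W = \lambda a + (1-\lambda) b$ in $\ol\Mbf(G,\Ccal)$. Both $a$ and $b$ are necessarily supported on $A := W \cup N_\Ccal(W)$, and since $e_\Ccal^W$ is constant on~$A$, for every $x \in N_\Ccal(W)$ and $y \in W$ with $y \sim x$ the nonnegative differences $a_x - a_y$ and $b_x - b_y$ have zero weighted sum and must both vanish. Consequently $a$ and $b$ are constant on each connected component of the bipartite graph $H$ on $A$ whose edges are the $G$-edges between $W$ and $N_\Ccal(W)$. Since every vertex of $N_\Ccal(W)$ has a neighbour in $W$ and therefore lies in the same $H$-component as that neighbour, connectedness of $H$ is equivalent to the existence of an alternating $W$-$N_\Ccal(W)$-path between any two elements of $W$, which is exactly the condition in~(3). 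If $H$ is connected, $a$ must be constant on $A$ and hence equal to $e_\Ccal^W$, proving vertexness; if $H$ is disconnected, assigning different positive values on distinct components (normalized to total mass one) produces $a \neq b$ averaging to $e_\Ccal^W$.

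The main technical step I expect to require care is the last one: I must verify that these component-wise constant distributions really do belong to $\ol\Mbf(G,\Ccal)$, i.e., satisfy \emph{every} mode inequality rather than only those already used. This reduces to a short case analysis leveraging the independence of~$\Ccal$: a neighbour $y$ of any $x \in \Ccal \setminus A$ cannot lie in $A$, because independence rules out $y \in \Ccal \cap A$ and $x \notin N_\Ccal(W)$ rules out $y \in W$; and a neighbour $y \in A$ of $x \in N_\Ccal(W)$ must lie in $W$ (again by independence), hence in the same $H$-component as $x$, so $a_y = a_x$ automatically.
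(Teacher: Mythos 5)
Your proof is correct, and it differs from the paper's in two genuine ways. For part (1) the paper argues by induction on $|\supp(p)\setminus\Ccal|$, peeling off $\lambda e_{\Ccal}^{W}$ with $\lambda=\min\{p_x:x\in W\}$ and renormalizing; your layer-cake decomposition $p=\sum_i(t_i-t_{i-1})|A_i|u_{A_i}$ over the $\Ccal$-closed superlevel sets $A_i$ is a closed-form, non-inductive version of the same peeling, and your observation that each $u_{A}$ splits as a combination of $e_{\Ccal}^{W}$ (with $W=A\setminus\Ccal$) and point masses on $(A\cap\Ccal)\setminus N_{\Ccal}(W)$ is exactly the needed bridge; the check that $A_i$ is $\Ccal$-closed plays the role of the paper's unexpanded ``one checks that $p'\in\ol\Mbf(G,\Ccal)$.'' For part (3) the two arguments share the key combinatorial reformulation (alternating paths $=$ connectivity of the bipartite graph $H$ between $W$ and $N_{\Ccal}(W)$), and the non-vertex direction is essentially identical (split $W$ along $H$-components; the paper takes the specific convex combination of $e_{\Ccal}^{W_1}$ and $e_{\Ccal}^{W_2}$, you allow general component-wise perturbations). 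But your vertex direction is genuinely different: the paper exhibits, for each competing generator $e_{\Ccal}^{W'}$, a face containing $e_{\Ccal}^{W}$ but not $e_{\Ccal}^{W'}$, which presupposes part (1) and tacitly ignores possible decompositions involving the $\delta_x$; you instead prove extremality directly by showing that any convex decomposition $e_{\Ccal}^{W}=\lambda a+(1-\lambda)b$ inside the polytope forces $a$ and $b$ to be constant on $H$-components, hence equal to $e_{\Ccal}^{W}$ when $H$ is connected. This is self-contained, independent of part (1), and automatically excludes all competing decompositions at once. Your closing case analysis verifying that the component-wise constant perturbations satisfy every mode inequality (using independence of $\Ccal$) is exactly the point the paper leaves implicit, and it is correct; the only detail worth writing out is that the perturbation must be small enough that both $a$ and $b=2e_{\Ccal}^{W}-a$ remain nonnegative, which is immediate.
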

\begin{proof}
  Clearly, for every non-empty $W\subseteq V\setminus\Ccal$, the vector $e_{\Ccal}^{W}$ belongs to
  $\ol\Mbf(G,\Ccal)$, and the same is true for the vectors $\delta_{x}$ with $x\in\Ccal$ ($\Ccal$ is
  independent).  Next we show that each $p\in\ol\Mbf(G,\Ccal)$ can be written as a convex combination of $\{ e_{\Ccal}^{W}
  : \emptyset\neq W\subseteq V\setminus\Ccal\} \cup \{ \delta_{x} : x\in\Ccal\}$.  We do induction on the cardinality of
  $W := \supp(p)\setminus\Ccal$.  If $|W|=0$, then $p\in\Delta(\Ccal)$ is a convex combination of $\{ \delta_{x} :
  x\in\Ccal\}$.  Now assume $|W|>0$.  Let $\lambda = \min\{ p_x : x\in W\}$.
  Then, $p - \lambda e_{\Ccal}^{W}\ge 0$ (component-wise) and $\sum_{x}(p_{x} - \lambda
  e_{\Ccal}^{W}(x))=(1-\lambda)$.  Therefore, 
  \begin{equation*}
    p' := \frac{1}{1-\lambda}(p - \lambda e_{\Ccal}^{W}) \in\Delta(V).
  \end{equation*}
  Moreover, one checks that $p'\in\ol\Mbf(G,\Ccal)$.  By
  definition, $\supp(p') \setminus \Ccal \subsetneq \supp(p)\setminus\Ccal$.  By
  induction, $\supp(p')$ is a convex combination of $\{ e_{\Ccal}^{W} : \emptyset\neq W\subseteq V\setminus\Ccal\} \cup
  \{ \delta_{x} : x\in\Ccal\}$, and so the same is true for~$p$.

  It remains to check which elements of $\{ e_{\Ccal}^{W} : \emptyset\neq W\subseteq V\setminus\Ccal\} \cup \{
  \delta_{x} : x\in\Ccal\}$ are vertices of~$\ol\Mbf(G,\Ccal)$.  Since $\delta_{x}$ is a vertex of $\Delta(V)$, it is also
  a vertex of~$\ol\Mbf(G,\Ccal)$.  Let $W\subset V\setminus\Ccal$ be non-empty.
  Call a path such as in the statement of the proposition an \emph{alternating path}.  Suppose that there is no
  alternating path from $x$ to $y$ for some~$x,y\in W$. Let $W_{1}=\{z\in W:\text{ There is an alternating path from $x$
    to~$z$}\}$ and let $W_{2} = W\setminus W_{1}$.  Then $W_{1}, W_{2}$ are non-empty, and $N_{\Ccal}(W_{1})\cap \tilde
  N_{\Ccal}(W_{2})$ is empty.  Hence $e_{\Ccal}^{W}$ is a convex combination of $e_{\Ccal}^{W_{1}}$ and
  $e_{\Ccal}^{W_{2}}$, and $e_{\Ccal}^{W}$ is not a vertex.

  Let $W$ be a non-empty subset of $V\setminus\Ccal$ such that any pair of elements of $W$ is connected by an alternating path.  
  To show that $e_{\Ccal}^{W}$ is a vertex, for any different non-empty set $W'\subseteq V\setminus\Ccal$ we need to find a face of
  $\ol\Mbf(G,\Ccal)$ that contains $e_{\Ccal}^{W}$ but not $e_{\Ccal}^{W'}$.  If there exists $x\in W'\setminus W$, then  $e_{\Ccal}^{W'}(x) > 0 = e_{\Ccal}^{W}(x)$. 
  Hence, $e_{\Ccal}^{W}$ lies on the face of $\ol\Mbf(G,\Ccal)$ defined by $p_{x}\ge 0$, but $e_{\Ccal}^{W'}$ does not. 
  Otherwise, $W'\subsetneq W$.  Let $x'\in W\setminus W'$ and $y'\in W'\neq\emptyset$.  By assumption, there exists an
  alternating path from $x'$ to~$y'$ in~$W$.  On this path, there exist $x\in W\setminus W'$ and $y\in\Ccal$ with $y\sim
  x$ and $y\in N_{\Ccal}(W')$.  Therefore, $e_{\Ccal}^{W'}(y) - e_{\Ccal}^{W'}(x) > 0 = e_{\Ccal}^{W}(y) - e_{\Ccal}^{W}(x)$. 
\qed
\end{proof}

\begin{corollary}
  \label{cor:dim-modes}
$\ol\Mbf(G,\Ccal)$ is a full-dimensional sub-polytope of~$\Delta(V)$.
\end{corollary}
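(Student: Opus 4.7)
The plan is to prove full-dimensionality by producing a point in the relative interior of $\ol\Mbf(G,\Ccal)$ inside $\Delta(V)$. The polytope is carved out of $\Delta(V)$ by the non-negativity inequalities $p_z\ge 0$ ($z\in V$) together with the mode inequalities $p_x\ge p_y$ (for $x\in\Ccal$, $y\sim x$), so an interior point is simply a distribution with every coordinate strictly positive and every mode inequality strict. Exhibiting one such point gives $\dim\ol\Mbf(G,\Ccal) = \dim\Delta(V) = |V|-1$.

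To construct it, I would take the two-level distribution $p_x=a$ for $x\in\Ccal$ and $p_y=b$ for $y\in V\setminus\Ccal$, with $a>b>0$ chosen so that $|\Ccal|a+(|V|-|\Ccal|)b=1$. The standing assumption (imposed in the paragraph preceding the corollary) that $\Ccal$ is independent makes every neighbor $y$ of an $x\in\Ccal$ lie outside $\Ccal$, so $p_y = b < a = p_x$, and all coordinates are positive by construction. This is the only place independence of $\Ccal$ enters, and it enters precisely to prevent any $p_x\ge p_y$ from being forced to equality.

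I do not anticipate any real obstacle, since the argument amounts to a one-line verification; the substance of the corollary is just that the hypothesis on $\Ccal$ eliminates the obstruction noted in the preceding paragraph. As a cross-check, one could instead invoke Proposition~\ref{prop:vertices-M}: the $|V|$ points $\{\delta_{x}:x\in\Ccal\}\cup\{e_{\Ccal}^{\{v\}}:v\in V\setminus\Ccal\}$ are all vertices (singleton $W=\{v\}$ trivially satisfies the alternating-path condition), and they are affinely independent because, for each $v\in V\setminus\Ccal$, the vertex $e_{\Ccal}^{\{v\}}$ is the unique one among them that assigns positive mass to $v$, while the $\delta_{x}$ are distinguished by their supports inside $\Ccal$. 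Producing $|V|$ affinely independent points in a polytope contained in the $(|V|-1)$-dimensional simplex $\Delta(V)$ again forces $\dim\ol\Mbf(G,\Ccal)=|V|-1$.
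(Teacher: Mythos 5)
Your proposal is correct, and your primary argument is a genuinely different (and more elementary) route than the paper's. The paper proves the corollary by exhibiting the $|V|$ points $\{\delta_{x}:x\in\Ccal\}\cup\{e_{\Ccal}^{\{y\}}:y\in V\setminus\Ccal\}$ and observing that their convex hull is a $(|V|-1)$-simplex contained in $\ol\Mbf(G,\Ccal)$ --- which is exactly your ``cross-check,'' and your justification of affine independence (each $e_{\Ccal}^{\{v\}}$ is the only listed point with mass at $v$, forcing its coefficient to vanish, after which the $\delta_x$ coefficients vanish coordinatewise on $\Ccal$) fills in a detail the paper leaves implicit. Your main argument instead produces a point of $\Delta(V)$ satisfying every defining inequality of $\ol\Mbf(G,\Ccal)$ strictly: the two-level distribution with $p_x=a$ on $\Ccal$ and $p_y=b$ on $V\setminus\Ccal$, $a>b>0$, works precisely because independence of $\Ccal$ guarantees every neighbor of a declared mode lies outside $\Ccal$. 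A point satisfying all inequalities strictly has a relatively open neighborhood in $\Delta(V)$ inside the polytope, so full-dimensionality follows. What each approach buys: yours is self-contained and does not rely on Proposition~\ref{prop:vertices-M} (or even on knowing any vertices), while the paper's version reuses machinery already established and, as a by-product, identifies an explicit full-dimensional simplex inside $\ol\Mbf(G,\Ccal)$ that is then used again in the facet analysis (part 1 of the facet proposition cites this subsimplex). Both are complete proofs.
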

\begin{proof}
  The convex hull of $\{\delta_{x}:x\in\Ccal\}\cup\{e^{\{y\}}_{\Ccal} : y\in V\setminus\Ccal\}$ is a $(|V|-1)$-simplex
  and a subset of $\ol\Mbf(G,\Ccal)$. 
\qed
\end{proof}

\subsubsection{Facets.}

$\ol\Mbf(G,\Ccal)$ is defined, as a subset of~$\Delta(V)$, by the 
inequalities
\begin{align*}
p_{x}&\ge 0, && \text{for all }x\in V, && \text{(positivity inequalities)} \\
p_{x}&\ge p_{y}, && \text{for all }x\in\Ccal\text{ and }y\sim x. && \text{(mode inequalities)}
\end{align*}
Next we discuss, which of these inequalities define facets.

\begin{proposition}
\mbox{}
  \begin{enumerate}
  \item For any $x\in V\setminus\Ccal$, the positivity inequality $p_{x}\ge 0$ defines a facet.
  \item If $x\in \Ccal$, then $p_{x}\ge 0$ defines a facet iff $x$ is isolated in~$G$.
  \item For any $x\in\Ccal$ and $y\sim x$, the mode inequality $p_{x}\ge p_{y}$ defines a facet.
  \end{enumerate}
\end{proposition}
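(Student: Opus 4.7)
The plan is to check each case by either identifying the face cut out by the inequality with a smaller full-dimensional mode polytope and invoking Corollary~\ref{cor:dim-modes}, or by exhibiting a point in its relative interior. Since $\ol\Mbf(G,\Ccal)$ has dimension $|V|-1$, a face is a facet exactly when it has dimension $|V|-2$.

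For part~(1), with $x \in V\setminus\Ccal$, setting $p_x = 0$ turns each mode inequality $p_u \ge p_x$ (for $u \in \Ccal$, $u \sim x$) into $p_u \ge 0$, which is already present as a positivity inequality. Hence the face coincides with $\ol\Mbf(G \setminus \{x\}, \Ccal)$ inside $\Delta(V \setminus \{x\})$, and Corollary~\ref{cor:dim-modes} gives dimension $|V|-2$. For part~(2), with $x \in \Ccal$: if $x$ has some neighbor $y$, then $p_x \ge p_y \ge 0$ forces $p_y = 0$ on the face $\{p_x = 0\}$, so the face lies in an affine subspace of codimension at least~$2$ and cannot be a facet; if $x$ is isolated, no mode inequality involves $p_x$, and the face identifies with $\ol\Mbf(G \setminus \{x\}, \Ccal \setminus \{x\})$, again of dimension $|V|-2$.

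Part~(3) is the main step: for $x \in \Ccal$ and $y \sim x$, I would exhibit a point in the relative interior of $F_3 = \ol\Mbf(G,\Ccal) \cap \{p_x = p_y\}$, proving it has dimension $|V|-2$. The candidate is a weighted perturbation of the uniform distribution: set $w_v = 2$ for $v \in \Ccal \setminus \{x\}$, $w_v = 1$ for $v \in \{x,y\}$, $w_v = 0$ otherwise, and let $p_v \propto 1 + \epsilon w_v$ for small $\epsilon > 0$. Positivity is clear and $p_x = p_y$ holds by symmetry of the weights on $\{x,y\}$. For every other mode inequality $p_u \ge p_w$ (with $u \in \Ccal$ and $w \sim u$), independence of $\Ccal$ forces $w \notin \Ccal$, so $w_w \le 1$; a short case split on whether $u = x$ or $u \in \Ccal \setminus \{x\}$ then yields $w_u > w_w$ in each subcase, giving strict inequality. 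The main obstacle is precisely this step: the weights must load the modes heavily enough that no unintended equality creeps in, and the above assignment is arranged so that the only non-mode with positive weight, namely $y$, still has weight less than any mode in $\Ccal \setminus \{x\}$.
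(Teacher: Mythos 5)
Your proof is correct. Parts (1) and (2) run essentially parallel to the paper's: for (1) the paper observes that $p_x\ge 0$ cuts a facet of the full-dimensional subsimplex from Corollary~\ref{cor:dim-modes}, while you identify the face with the smaller mode polytope $\ol\Mbf(G\setminus\{x\},\Ccal)$ and apply the corollary there; for (2) the non-isolated case is the same ``$0=p_x\ge p_y\ge 0$ forces $p_y=0$'' argument. Part (3) is where you genuinely diverge: the paper exhibits a point \emph{outside} the polytope that violates only the inequality $p_x\ge p_y$ (the uniform distribution on $W\cup\{y\}$ with $W=\{z\in\Ccal:z\sim y\}\setminus\{x\}$), i.e., it certifies irredundancy, which for a full-dimensional polytope implies facet-definingness; you instead construct a point \emph{on} the hyperplane $p_x=p_y$ at which every other defining inequality is strict, so that a neighborhood of it within that hyperplane lies in the face, giving dimension $|V|-2$ directly. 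These are the two standard dual certificates, and your weight assignment ($2$ on $\Ccal\setminus\{x\}$, $1$ on $\{x,y\}$, $0$ elsewhere) does verify correctly: independence of $\Ccal$ puts every non-excluded right-hand vertex $w$ outside $\Ccal$, so $w_w=0$ unless $w=y$, and $w=y$ can only occur with $u\in\Ccal\setminus\{x\}$, where $w_u=2>1$. Your approach has the small advantage of not needing the (implicit) lemma that irredundant inequalities of full-dimensional polytopes are facet-defining, at the cost of a slightly more delicate construction. One wording quibble: ``exhibit a point in the relative interior of $F_3$'' is not by itself a proof of the dimension claim (every nonempty face has a relatively interior point); what carries the argument is that your point satisfies all \emph{other} inequalities strictly, which you do establish.
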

\begin{proof}
  1. The inequality $p_{x}\ge 0$ defines a facet of the subsimplex from the proof of Corollary~\ref{cor:dim-modes}, and
  hence also of~$\ol\Mbf(G,\Ccal)$.

  2. If $x$ is isolated, then $x$ is a mode of any distribution.  Therefore, $\ol\Mbf(G,\Ccal) =
  \ol\Mbf(\Ccal\setminus\{x\})$, and the statement follows from 1.

  Otherwise, suppose there exists $y\in V$ with $x\sim y$.  Since $\Ccal$ is independent, $y\notin\Ccal$.
  Then $p_{x} = (p_{x} - p_{y}) + p_{y}$; that is, the inequality $p_{x}\ge 0$ is implied by the inequalities
  $p_{x}\ge p_{y}$ and $p_{y}\ge 0$, and $p_{x}\ge 0$ defines a sub-face of the facet $p_{y}\ge 0$, which is a strict
  sub-face, since it does not contain $\delta_{x}$.  Therefore, $p_{x}\ge 0$ does not define a facet itself.

  3. Let $W :=\{ z\in\Ccal : z\sim y\}\setminus\{x\}$.  The uniform distribution on $W\cup\{y\}$ satisfies all defining
  inequalities of $\ol\Mbf(G,\Ccal)$, except~$p_{x}\ge p_{y}$. 
\qed
\end{proof}

\subsubsection{Triangulation and volume.}

The polytope $\ol\Mbf(G,\Ccal)$ has a natural triangulation that comes from a natural triangulation of~$\Delta(V)$. 
Let $N=|V|$ be the cardinality of~$V$.  For any bijection $\sigma:\{1,\dots,N\}\to V$ let
\begin{equation*}
  \Delta_{\sigma} = \{ p\in\Delta(V) : p_{\sigma(i)}\le p_{\sigma(i+1)}\text{ for } i=1,\dots,N-1 \}.
\end{equation*}
Clearly, the $\Delta_{\sigma}$ form a triangulation of~$\Delta(V)$. 
In particular, 
$\Delta(V) = \bigcup_{\sigma}\Delta_{\sigma}$ and $\vol(\Delta_{\sigma}\cup\Delta_{\sigma'}) =
\vol(\Delta_{\sigma})+\vol(\Delta_{\sigma'})$ whenever $\sigma\neq\sigma'$.
\begin{lemma}
  \label{lem:triangulation}
  Let $\Sigma(G,\Ccal)$ be the set of all bijections $\sigma:\{1,\dots,N\}\to V$ that satisfy $\sigma^{-1}(x)<\sigma^{-1}(y)$ for
  all $y\in\Ccal$ and $x\sim y$.  Then $\ol\Mbf(G,\Ccal) = \bigcup_{\sigma\in\Sigma(G,\Ccal)}\Delta_{\sigma}$.
\end{lemma}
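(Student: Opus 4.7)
The plan is to prove set equality by two inclusions, exploiting the fact that the simplices $\Delta_\sigma$, as $\sigma$ ranges over all bijections, already triangulate $\Delta(V)$; all that is at stake is which of these pieces lie inside $\ol\Mbf(G,\Ccal)$.

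For the inclusion $\bigcup_{\sigma\in\Sigma(G,\Ccal)}\Delta_{\sigma}\subseteq\ol\Mbf(G,\Ccal)$, I would take $\sigma\in\Sigma(G,\Ccal)$ and $p\in\Delta_\sigma$, pick any $y\in\Ccal$ and neighbor $x\sim y$, and observe that by definition of $\Sigma(G,\Ccal)$ we have $\sigma^{-1}(x)<\sigma^{-1}(y)$, so the chain of inequalities defining $\Delta_\sigma$ gives $p_x\le p_y$. Thus every $y\in\Ccal$ is a mode of $p$, so $p\in\ol\Mbf(G,\Ccal)$.

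For the reverse inclusion, I would start from $p\in\ol\Mbf(G,\Ccal)$ and construct a $\sigma\in\Sigma(G,\Ccal)$ with $p\in\Delta_\sigma$ by sorting. Choose any total order on $V$ that refines the partial order by $p$-value, breaking ties inside each level set $\{x:p_x=v\}$ by listing the elements of $V\setminus\Ccal$ before those of $\Ccal$ (any order within each block is fine). Let $\sigma$ enumerate $V$ in this order, so $p_{\sigma(1)}\le\dots\le p_{\sigma(N)}$ and in particular $p\in\Delta_\sigma$. To verify $\sigma\in\Sigma(G,\Ccal)$, fix $y\in\Ccal$ and $x\sim y$. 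The mode condition gives $p_x\le p_y$. If $p_x<p_y$ then $\sigma^{-1}(x)<\sigma^{-1}(y)$ from the primary sort. If $p_x=p_y$, then since $\Ccal$ is independent and $y\in\Ccal$ has $x$ as a neighbor, we must have $x\notin\Ccal$, and the tie-breaking rule again puts $x$ before $y$.

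The main (and really only) obstacle is the handling of ties: without a tie-breaking convention there might be no single $\sigma$ that simultaneously sorts $p$ and satisfies the $\Sigma(G,\Ccal)$ constraints. This is where the standing assumption that $\Ccal$ is independent enters in an essential way, since it guarantees that whenever $x\sim y$ with $y\in\Ccal$, the neighbor $x$ lies in the ``non-$\Ccal$ block'' of its level set and can safely be placed ahead of $y$. The fact that the pieces $\Delta_\sigma$ for $\sigma\in\Sigma(G,\Ccal)$ then genuinely triangulate $\ol\Mbf(G,\Ccal)$ (meeting only in common faces) is inherited for free from the known triangulation of $\Delta(V)$.
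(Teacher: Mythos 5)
Your proof is correct and follows essentially the same route as the paper: the forward inclusion is immediate from the definitions, and the reverse inclusion is by sorting $V$ by $p$-value and choosing a suitable tie-breaking rule. In fact you spell out the tie-breaking (placing $V\setminus\Ccal$ before $\Ccal$ within each level set, using independence of $\Ccal$) more explicitly than the paper, which merely asserts that a bijection with the two required properties can be chosen.
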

\begin{proof}
  If $\sigma\in\Sigma$ and $p\in\Delta_{\sigma}$, then $p\in\ol\Mbf(G,\Ccal)$ by definition.  Conversely, let
  $p\in\ol\Mbf(G,\Ccal)$.  Choose a bijection $\sigma:\{1,\dots,N\}\to V$ that satisfies the following: 
  \begin{enumerate}
  \item $p_{\sigma(i+1)}\ge p_{\sigma(i)}$ for $i=1,\dots,N-1$,
  \item If $x\in\Ccal$ and $y\sim x$, then $\sigma^{-1}(x)\le\sigma^{-1}(y)$.
  \end{enumerate}
Clearly, $\sigma\in\Sigma$, and $p\in\Delta_{\sigma}$. 
 \qed
\end{proof}

\begin{corollary}
  $\vol(\ol\Mbf(G,\Ccal)) = \frac{|\Sigma|}{|V|!}\vol(\Delta(V))$.
\end{corollary}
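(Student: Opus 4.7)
The plan is to deduce this volume formula directly from Lemma~\ref{lem:triangulation} together with the fact that the family $\{\Delta_\sigma\}_{\sigma}$ is a triangulation of $\Delta(V)$ by congruent simplices, one for each permutation.

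First, I would verify that the $N!$ simplices $\Delta_\sigma$ indexed by all bijections $\sigma:\{1,\dots,N\}\to V$ cover $\Delta(V)$ and have pairwise disjoint interiors. Covering is immediate: given any $p\in\Delta(V)$, sort its coordinates in non-decreasing order to obtain a $\sigma$ with $p\in\Delta_\sigma$. Two distinct simplices $\Delta_\sigma$ and $\Delta_{\sigma'}$ can only meet where some coordinates coincide, i.e.\ on a common facet, which is a measure-zero set (as remarked in the excerpt just before the lemma).

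Second, I would observe that all $\Delta_\sigma$ have the same $(|V|-1)$-dimensional volume. The symmetric group acts on $\Delta(V)$ by permuting coordinates; such a permutation is the restriction of an orthogonal linear map of $\mathbb{R}^V$, hence volume-preserving on the affine hull of $\Delta(V)$. Since the permutation $\tau\sigma^{-1}$ sends $\Delta_\sigma$ onto $\Delta_\tau$, we get $\vol(\Delta_\sigma)=\vol(\Delta_\tau)$ for all $\sigma,\tau$. Combined with the previous step, this yields
\begin{equation*}
\vol(\Delta_\sigma) = \frac{1}{N!}\vol(\Delta(V)) \qquad\text{for every } \sigma.
\end{equation*}

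Third, I would apply Lemma~\ref{lem:triangulation}: the polytope $\ol\Mbf(G,\Ccal)$ is the union of exactly those $\Delta_\sigma$ with $\sigma\in\Sigma(G,\Ccal)$, and since these are pieces of the global triangulation, they still have pairwise measure-zero overlaps. Summing the equal volumes gives
\begin{equation*}
\vol(\ol\Mbf(G,\Ccal)) = \sum_{\sigma\in\Sigma(G,\Ccal)}\vol(\Delta_\sigma) = \frac{|\Sigma|}{|V|!}\vol(\Delta(V)),
\end{equation*}
as claimed. There is essentially no obstacle here; the only point requiring mild care is confirming that distinct $\Delta_\sigma$'s overlap only on a set of lower dimension, so that the volumes add, and this is exactly the content of the triangulation remark preceding Lemma~\ref{lem:triangulation}.
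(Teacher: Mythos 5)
Your proof is correct and follows essentially the same route as the paper: decompose $\ol\Mbf(G,\Ccal)$ via Lemma~\ref{lem:triangulation} into the simplices $\Delta_\sigma$ with $\sigma\in\Sigma(G,\Ccal)$, note that all $\Delta_\sigma$ are congruent with pairwise measure-zero overlaps, and sum. Your explicit justification of equal volumes via the coordinate-permutation action is a detail the paper leaves implicit, but the argument is the same.
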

\begin{proof}
  All simplices $\Delta_{\sigma}$ have the same volume. 
  Moreover, $\vol(\Delta_{\sigma}\cap\Delta_{\sigma'})=0$ for $\sigma\neq\sigma'$.  Thus,
  $\vol(\ol\Mbf(G,\Ccal))=|\Sigma|\vol(\Delta_{\sigma})$ and~$\vol(\Delta(V))=|V|!\vol(\Delta_{\sigma})$.
\qed
\end{proof}

It remains to compute the cardinality of~$\Sigma(G,\Ccal)$.  It is not difficult to enumerate $\Sigma(G,\Ccal)$ by
iterating over the set~$V$. 
However, $\Sigma(G,\Ccal)$ may be a very large, and so, enumerating it can take a very long time. 
In fact, this is a special instance of the problem of counting the number of linear extensions of a partial order (see
below); a problem which in many cases is known to be $\# P$-complete~\cite{CountingLinearExtensions}. 
In our case, a simple lower bound is $|\Sigma(G, \Ccal)|\geq |\Ccal|! |V\setminus \Ccal|!$ (equality holds only when $G$ is a complete bipartite graph and $\Ccal$ is one of the maximal independent sets).

\subsubsection{Relation to order polytopes.}
\label{sec:order-polytopes}

  The results in this section can also be derived from results about order polytopes.
  To explain this, it is convenient to slightly generalize our settings.  Instead of looking at a graph $G$ and an
  independent subset $\Ccal$ of nodes, consider a partial order $\succeq$ on~$V$ and let
  \begin{equation*}
    \ol\Mbf(\succeq) := \{ p\in\Delta(V) : p_x \ge p_y\text{ whenever }x\succeq y \}.
  \end{equation*}
  The polytope $\ol\Mbf(G,\Ccal)$ arises in the special case where $\succeq$ is defined by
  \begin{equation*}
    x \succeq y \quad :\Longleftrightarrow \quad x\sim y\text{ and }x\in\Ccal.
  \end{equation*}
  The relation $\succeq$ defined in this way from~$G$ and~$\Ccal$ is a partial order precisely if~$\Ccal$ is
  independent.
  Our results about vertices, facets and volumes directly generalize to~$\ol\Mbf(\succeq)$. 
  We omit further details at this point. 

  The \emph{order polytope} of a partial order arises by looking at subsets of the unit hypercube instead of subsets of
  the probability simplex (see~\cite{Stanley86:Poset_polytopes} and references):
  \begin{equation*}
    \Ocal(\succeq) := \{ p\in[0,1]^{V} : p_x \ge p_y\text{ whenever }x\succeq y \}. 
  \end{equation*}
  One can show that $\ol\Mbf(\succeq)$ is the vertex figure of~$\Ocal(\succeq)$ at the vertex~$0$.  This observation allows to transfer the results
  from~\cite{Stanley86:Poset_polytopes} to~$\ol\Mbf(G,\Ccal)$.

\section{The polytope of strong modes}
\label{sec:strong-modes}

\begin{definition}
A point $x\in V$ is a \emph{strong mode} of a probability distribution $p\in\Delta(V)$ if $p_x\ge\sum_{y\sim x}p_y$. 
\end{definition}

\begin{definition}
Consider a subset $\Ccal\subseteq V$. 
The \emph{polytope of strong $\Ccal$-modes} in~$G$ is the set $\ol\Sbf(G,\Ccal)$ all probability distributions $p\in\Delta(V)$ 
for which every $x\in\Ccal$ is a strong mode. 
\end{definition}
Again, in applications one may be interested in \emph{strict strong modes} that are characterized by strict inequalities
of the form $p_x>\sum_{y\sim x}p_y$. 

If $x\sim y$ for two strong modes of $p\in\Delta(V)$, 
then $p_x=p_y$ and $p_z=0$ for all other neighbors $z$ of $x$ or $y$. 
In order to avoid such pathological cases, in the following we always assume that $\Ccal$ is an independent subset of $G$. 

\begin{example}
  \label{ex:quadrangle-strong-modes}
  Consider the graph from Example~\ref{ex:square-modes}.
  For~$\Ccal=\{01,10\}$, the polytope $\ol\Sbf(G,\Ccal)$ is given in Figure~\ref{fig:quadrangle}.
\end{example}

Again, we are interested in the vertices of the polytope~$\ol\Sbf(G,\Ccal)$. 
For any $x\in V$ let $N_{\Ccal}(x) = \{ y\in\Ccal: y\sim x \}$ (this is the set of strong modes which are neighbors of $x$) and let $f_{\Ccal}^{x}$ be the uniform distribution on $N_{\Ccal}(x)\cup\{x\}$. 

\begin{proposition}
\label{proposition:verticesstrongmodes}
If $\Ccal$ is independent, then 
$\ol\Sbf(G,\Ccal)$ is a $(|V|-1)$-simplex with vertices $f_{\Ccal}^{x}$, $x\in V$.
\end{proposition}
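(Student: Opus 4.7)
The plan is to show the two inclusions needed to identify $\ol\Sbf(G,\Ccal)$ as the convex hull of $\{f_{\Ccal}^{x}:x\in V\}$, and then observe that these $|V|$ points are affinely independent, so the hull is a full-dimensional simplex in $\Delta(V)$.

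First I would verify that each $f_{\Ccal}^{x}$ lies in $\ol\Sbf(G,\Ccal)$, splitting into cases. If $x\in\Ccal$, then independence of $\Ccal$ gives $N_{\Ccal}(x)=\emptyset$, so $f_{\Ccal}^{x}=\delta_{x}$, and the strong-mode inequality for each $z\in\Ccal$ is trivial ($z=x$ gives $1\ge 0$; $z\ne x$ gives $0\ge 0$, using $x\not\sim z$ by independence). If $x\notin\Ccal$, then for each $z\in N_{\Ccal}(x)$ independence of $\Ccal$ implies that the only neighbor of $z$ in $\operatorname{supp}(f_{\Ccal}^{x})=N_{\Ccal}(x)\cup\{x\}$ is $x$ itself, so the strong-mode inequality holds with equality; and for $z\in\Ccal\setminus N_{\Ccal}(x)$ both sides vanish.

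Next I would show that every $p\in\ol\Sbf(G,\Ccal)$ is a convex combination of the $f_{\Ccal}^{x}$ with explicit coefficients
\[
\lambda_{x}=\bigl(|N_{\Ccal}(x)|+1\bigr)p_{x}\quad(x\notin\Ccal),\qquad \lambda_{x}=p_{x}-\sum_{y\sim x}p_{y}\quad(x\in\Ccal).
\]
Nonnegativity of $\lambda_{x}$ for $x\notin\Ccal$ is immediate, and for $x\in\Ccal$ it is precisely the strong-mode hypothesis at $x$. To see that $\sum_{x}\lambda_{x}f_{\Ccal}^{x}=p$, one evaluates at any $v\in V$: if $v\notin\Ccal$ only $f_{\Ccal}^{v}$ contributes to coordinate $v$ (since $\Ccal$ is independent, no other $f_{\Ccal}^{x}$ puts mass on a non-$\Ccal$ vertex), giving $p_{v}$ directly; if $v\in\Ccal$ the contributions are $\lambda_{v}+\sum_{y\sim v,\ y\notin\Ccal}\lambda_{y}/(|N_{\Ccal}(y)|+1)=\lambda_{v}+\sum_{y\sim v}p_{y}=p_{v}$, again using independence to identify the sum over non-$\Ccal$ neighbors with the sum over all neighbors.

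The only point needing care is $\sum_{x}\lambda_{x}=1$; I would handle it by a swap of summation, writing
\[
\sum_{x\notin\Ccal}|N_{\Ccal}(x)|\,p_{x}=\sum_{y\in\Ccal}\sum_{x\sim y}p_{x},
\]
(valid because independence of $\Ccal$ ensures every neighbor of $y\in\Ccal$ lies outside $\Ccal$), so that the double sums in $\sum_{x\in\Ccal}\lambda_{x}$ and $\sum_{x\notin\Ccal}\lambda_{x}$ cancel and leave $\sum_{x\in V}p_{x}=1$. This is the main bookkeeping obstacle; everything else is direct from the definitions. Finally, uniqueness of the coefficients (forced by evaluation on $V\setminus\Ccal$ together with the mass constraint) yields affine independence of $\{f_{\Ccal}^{x}:x\in V\}$, so the polytope is indeed a $(|V|-1)$-simplex with the stated vertices.
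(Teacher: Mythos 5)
Your proof is correct, but it takes a genuinely different route from the paper's. The paper establishes the convex-hull inclusion by induction on $|\supp(p)\setminus\Ccal|$, peeling off a suitable multiple of one $f_{\Ccal}^{x}$ at a time, and it gets linear independence of the $f_{\Ccal}^{x}$ by ordering $V$ so that the matrix of these vectors becomes triangular. You instead exhibit the barycentric coordinates in closed form, $\lambda_{x}=(|N_{\Ccal}(x)|+1)p_{x}$ for $x\notin\Ccal$ and $\lambda_{x}=p_{x}-\sum_{y\sim x}p_{y}$ for $x\in\Ccal$, and verify directly that they are nonnegative, sum to one, and reproduce $p$ coordinatewise (your separate check of $\sum_{x}\lambda_{x}=1$ is actually redundant, since it follows from the coordinatewise identity by summing over $V$). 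This buys several things: the argument is non-inductive; the affine coordinate functionals you write down are exactly the facet-defining functionals of the simplex, so the facet description in the subsequent proposition falls out for free; and your coefficient $(|N_{\Ccal}(x)|+1)p_{x}$ makes explicit the correct multiple of $f_{\Ccal}^{x}$ to subtract in the paper's inductive step (subtracting $p_{x}f_{\Ccal}^{x}$ as literally written there leaves the $x$-coordinate equal to $p_{x}|N_{\Ccal}(x)|/(|N_{\Ccal}(x)|+1)\neq 0$, so the support does not shrink unless one rescales as you do). The paper's inductive scheme, on the other hand, is the one that generalizes to $\ol\Mbf(G,\Ccal)$ in Proposition~\ref{prop:vertices-M}, where no such clean closed-form coordinates exist; your evaluation argument for affine independence and the paper's triangular-matrix observation are essentially the same fact seen from two sides.
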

\begin{proof}
  To see that $\{ f_{\Ccal}^{x} : x\in V\}$ is linearly independent, observe that the matrix with columns
  $f_{\Ccal}^{x}$ is in tridiagonal form when $V$ is ordered such that the vertices in $\Ccal$ come before the vertices
  in $V\setminus\Ccal$. 
  Therefore, the probability distributions $f_{\Ccal}^{x}$ span a $(|V|-1)$-dimensional simplex. 

  It is easy to check that $f_{\Ccal}^{x}\in\ol\Sbf(G,\Ccal)$ for any~$x\in V$.  It remains to prove that any
  $p\in\ol\Sbf(G,\Ccal)$ lies in the convex hull of $\{ f_{\Ccal}^{x} : x\in V\}$.  We do induction on the cardinality
  of $W := \supp(p)\setminus\Ccal$.  If $|W|=0$, then $p\in\Delta(\Ccal)$ is a convex combination of $\{ \delta_{x} :
  x\in\Ccal\}=\{ f_{\Ccal}^{x} : x\in\Ccal\}$.
  Otherwise, let $x\in W$.  Then
  \begin{equation*}
    p' := \frac{1}{1-p_x}(p - p_x f_{\Ccal}^{x})\in\Delta(V),
  \end{equation*}
  since $p\in\ol\Mbf(G,\Ccal)$.  Moreover, $p'\in\ol\Mbf(G,\Ccal)$. 
  The statement now follows by induction, since $\supp(p')\setminus\Ccal = W\setminus\{x\}$.
\qed
\end{proof}

\begin{proposition}
  The facets of $\ol\Sbf(G,\Ccal)$ are  $p_x\ge \sum_{y\sim x} p_y$ for all $x\in\Ccal$ and $p_x\ge 0$ for all $x\in V\setminus\Ccal$. 
\end{proposition}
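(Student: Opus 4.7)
By Proposition~\ref{proposition:verticesstrongmodes}, $\ol\Sbf(G,\Ccal)$ is a simplex of dimension $|V|-1$ with vertex set $\{f_\Ccal^x : x\in V\}$. Hence it has exactly $|V|$ facets, each being the convex hull of $|V|-1$ vertices and thus opposite to a unique excluded vertex. The plan is to match the $|\Ccal|$ strong mode inequalities and the $|V\setminus\Ccal|$ positivity inequalities --- which together number exactly $|V|$ --- bijectively with these facets by showing that each listed inequality is tight on exactly $|V|-1$ vertices.

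First I would verify that all listed inequalities are valid on $\ol\Sbf(G,\Ccal)$: the strong mode inequalities hold by definition, and the positivity inequalities $p_x\ge 0$ for $x\in V\setminus\Ccal$ hold because $\ol\Sbf(G,\Ccal)\subseteq\Delta(V)$. The remaining positivity constraints $p_x\ge 0$ for $x\in\Ccal$ need not be listed, since they are implied by the corresponding strong mode inequality $p_x\ge\sum_{y\sim x}p_y\ge 0$.

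Next, for each listed inequality I would determine exactly which vertices make it tight. For $x\in V\setminus\Ccal$, the inequality $p_x\ge 0$ is immediate: $f_\Ccal^z(x)=0$ unless $x\in N_\Ccal(z)\cup\{z\}$, and since $x\notin\Ccal$ this forces $z=x$; moreover $f_\Ccal^x(x)>0$. So $p_x=0$ holds at every vertex except $f_\Ccal^x$. For $x\in\Ccal$, the strong mode inequality at $x$ requires a short case analysis on $z$: if $z=x$ then $f_\Ccal^x=\delta_x$ (since $N_\Ccal(x)=\emptyset$ by independence of~$\Ccal$) and the inequality is strict; if $z\in\Ccal\setminus\{x\}$ then $f_\Ccal^z=\delta_z$ and both sides vanish; and if $z\in V\setminus\Ccal$ then, because $\Ccal$ is independent, the only possible neighbor of $x$ in the support of $f_\Ccal^z$ is $z$ itself, and a direct computation yields $p_x=\sum_{y\sim x}p_y$ (both equal $1/(|N_\Ccal(z)|+1)$ if $z\sim x$, and both equal $0$ otherwise).

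Thus each listed inequality is tight on all vertices but one, so it defines the facet opposite to that excluded vertex. Since a $(|V|-1)$-simplex has precisely $|V|$ facets and our list contains $|V|$ inequalities matched with distinct excluded vertices, the list is complete. The main obstacle, modest as it is, is the $z\in V\setminus\Ccal$ sub-case of the strong mode analysis; everything else is combinatorial bookkeeping on the vertex supports described in Proposition~\ref{proposition:verticesstrongmodes}.
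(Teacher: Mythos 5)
Your proposal is correct and follows essentially the same route as the paper, which simply notes that each listed inequality is tight on exactly $|V|-1$ of the vertices of the simplex $\ol\Sbf(G,\Ccal)$; you spell out the vertex-by-vertex verification (including the only slightly nontrivial case $x\in\Ccal$, $z\in V\setminus\Ccal$, where independence of $\Ccal$ forces both sides to equal $\tfrac{1}{|N_\Ccal(z)|+1}[z\sim x]$) and the counting argument that a $(|V|-1)$-simplex has exactly $|V|$ facets.
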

\begin{proof}
It is easy to verify that each of the faces defined by these inequalities contains $|V|-1$ vertices. 
\qed
\end{proof}

\begin{proposition}
  $\displaystyle \vol(\ol\Sbf(G,\Ccal)) 
	=  \Big( \prod_{x\in V} \frac{1}{|N_\Ccal(x)| +1} \Big) \vol(\Delta(V))$. 
\end{proposition}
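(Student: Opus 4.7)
The plan is to compute the volume by exploiting the simplex structure established in Proposition~\ref{proposition:verticesstrongmodes}. Since both $\ol\Sbf(G,\Ccal)$ and $\Delta(V)$ are $(|V|-1)$-simplices in the affine hyperplane $\{p\in\mathbb{R}^{V}:\sum p_x = 1\}$, their volume ratio can be read off from a single determinant.

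First, I would invoke the standard fact that if $P\subseteq\Delta(V)$ is a simplex with vertices $v_{1},\dots,v_{N}$ and $M$ is the $N\times N$ column-stochastic matrix with $x$-th column $v_{x}$ expressed in the $\delta_{y}$ basis, then $\vol(P)=|\det M|\cdot\vol(\Delta(V))$. This follows because writing $v_{i}-v_{N}$ as the corresponding columns of $M-[\delta_{N},\dots,\delta_{N}]$ and expanding shows that the Jacobian of the affine map $\delta_{x}\mapsto v_{x}$ on the affine hull is exactly $\det M$.

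Next, I would apply this with $v_{x}=f_{\Ccal}^{x}$, so that $M_{yx}=f_{\Ccal}^{x}(y)=\frac{1}{|N_{\Ccal}(x)|+1}$ if $y\in N_{\Ccal}(x)\cup\{x\}$ and $0$ otherwise. The key observation is that independence of $\Ccal$ forces $N_{\Ccal}(x)=\emptyset$ for every $x\in\Ccal$, since no neighbor of such an $x$ can lie in $\Ccal$. Hence $f_{\Ccal}^{x}=\delta_{x}$ for $x\in\Ccal$. Ordering $V$ so that $\Ccal$ comes first, the matrix $M$ takes the block form
\begin{equation*}
  M = \begin{pmatrix} I_{\Ccal} & A \\ 0 & D \end{pmatrix},
\end{equation*}
where $D$ is diagonal with entries $D_{xx} = \frac{1}{|N_{\Ccal}(x)|+1}$ for $x\in V\setminus\Ccal$. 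The zero lower-left block comes from the fact that for $x\in\Ccal$ the column of $M$ is $\delta_{x}$, which vanishes on $V\setminus\Ccal$.

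Taking determinants, $\det M = \det D = \prod_{x\in V\setminus\Ccal}\frac{1}{|N_{\Ccal}(x)|+1}$. Finally, since each $x\in\Ccal$ contributes a factor $\frac{1}{|N_{\Ccal}(x)|+1}=1$, this equals $\prod_{x\in V}\frac{1}{|N_{\Ccal}(x)|+1}$, which combined with the first step yields the claimed formula. There is no real obstacle here; the only step worth double-checking is the volume-ratio identity, and once the independence of $\Ccal$ is used to triangularize $M$, the determinant evaluation is immediate.
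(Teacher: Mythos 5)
Your proposal is correct and follows essentially the same route as the paper: both reduce the claim to the fact that for a simplex with vertices in $\Delta(V)$ the volume ratio to $\Delta(V)$ is the absolute determinant of the vertex matrix (the paper's Lemma~\ref{proposition:volumesimplex}), and both evaluate that determinant by ordering $V$ with $\Ccal$ first so the matrix becomes (block) upper triangular with diagonal entries $\frac{1}{|N_\Ccal(x)|+1}$. The only cosmetic difference is that you justify the volume-ratio identity by a Jacobian argument while the paper proves it via a pyramid-over-the-simplex computation.
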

\begin{proof}
After rearrangement of columns, the matrix 
	\begin{equation*}
(f^x_\Ccal)_{x\in V} = 
	\left( ( \delta_x)_{x\in\Ccal},  
	\left( \tfrac{1}{|N_\Ccal(x)|+1}\mathds{1}_{N_\Ccal(x)}\right)_{x\in V\setminus \Ccal, x\sim\Ccal} ,  
	(\delta_x)_{x\in V\setminus\Ccal, x\not\sim\Ccal} \right) 
	\end{equation*}
is in upper triangular from, with diagonal elements $\tfrac{1}{|N_\Ccal(x)|+1}$, $x\in V$. 
The statement now follows from the next Lemma~\ref{proposition:volumesimplex}. 
\qed
\end{proof}

\begin{lemma}
	\label{proposition:volumesimplex}
	Let $\Delta=\operatorname{conv} \{e_0,\ldots, e_d\}$ be the standard $d$-simplex in $\mathbb{R}^{d+1}$ and 
	let $s_0,\ldots, s_d\in \Delta$. Then the $d$-volume of $S=\operatorname{conv}\{s_0,\ldots, s_d\}$ satisfies 
	\begin{equation*}
	\vol(S) 
	= |\det(s_0,\ldots, s_d)| \vol(\Delta) . 
	\end{equation*}
\end{lemma}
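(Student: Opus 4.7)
The plan is to relate the $d$-dimensional volume of $S$ (a subset of the hyperplane $H=\{x\in\mathbb{R}^{d+1}:\sum_i x_i=1\}$) to the $(d+1)$-dimensional volume of the pyramid from the origin $0$ over $S$. Since the $s_i$ have coordinate sums equal to $1$, they all lie in $H$, and so $S\subseteq H$; the same is true for $\Delta\subseteq H$. The origin $0$ has distance $h=1/\sqrt{d+1}$ from $H$, so the pyramid formula gives
\begin{equation*}
\vol_{d+1}\bigl(\operatorname{conv}\{0,s_0,\dots,s_d\}\bigr)=\tfrac{1}{d+1}\cdot h\cdot\vol_d(S),
\end{equation*}
and the analogous identity holds with the $s_i$ replaced by the $e_i$.

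Next I would use the standard formula for the Euclidean volume of a full-dimensional simplex in $\mathbb{R}^{d+1}$ with one vertex at the origin: for $v_0,\dots,v_d\in\mathbb{R}^{d+1}$,
\begin{equation*}
\vol_{d+1}\bigl(\operatorname{conv}\{0,v_0,\dots,v_d\}\bigr)=\tfrac{1}{(d+1)!}\bigl|\det(v_0,\dots,v_d)\bigr|.
\end{equation*}
Applied to the $s_i$ this yields $|\det(s_0,\dots,s_d)|/(d+1)!$, and applied to the $e_i$ it yields $1/(d+1)!$ (the determinant of the identity).

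Taking the ratio of the two pyramid-volume identities, the common factor $\tfrac{h}{d+1}$ cancels, and one obtains
\begin{equation*}
\frac{\vol_d(S)}{\vol_d(\Delta)}=\frac{\vol_{d+1}\bigl(\operatorname{conv}\{0,s_0,\dots,s_d\}\bigr)}{\vol_{d+1}\bigl(\operatorname{conv}\{0,e_0,\dots,e_d\}\bigr)}=\bigl|\det(s_0,\dots,s_d)\bigr|,
\end{equation*}
which is exactly the desired formula.

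The argument has essentially no hard step; the only thing to be careful about is justifying the pyramid-volume formula for a simplex living inside a hyperplane that does not pass through $0$ (so that both simplices are genuinely $(d+1)$-dimensional and the height factor $h=1/\sqrt{d+1}$ is the same in both cases). Once that is observed, the proof reduces to the cancellation above. An alternative, essentially equivalent, route would be to apply the linear map $T:\mathbb{R}^{d+1}\to\mathbb{R}^{d+1}$ with $T(e_i)=s_i$: since $T$ preserves the affine hyperplane $H$ (because column sums are $1$), its Jacobian relative to $H$ equals its ordinary determinant, and $T(\Delta)=S$ gives the formula directly; but I find the pyramid argument cleaner since it avoids any explicit change of basis on $H$.
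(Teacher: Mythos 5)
Your proof is correct and follows essentially the same route as the paper's: coning over $S$ and over $\Delta$ from the origin, using the determinant formula for the $(d+1)$-dimensional simplex volumes and the common height $h=1/\sqrt{d+1}$ to the hyperplane $\sum_i x_i=1$. The only cosmetic difference is that you cancel the factor $h/(d+1)$ by taking a ratio, whereas the paper substitutes the explicit value $\vol(\Delta)=\sqrt{d+1}/d!$; the content is identical.
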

\begin{proof}
    The $(d+1)$-volume of the parallelepiped spanned by $s_0,\ldots, s_d\in\mathbb{R}^{d+1}$ is $|\det(s_0,\ldots, s_d)|$. 
	The volume of an $n$-simplex with vertices $v_0,\ldots, v_n$ in $\mathbb{R}^n$ is $\frac{1}{n!} |\det(v_1-v_0, \ldots, v_n-v_0)|$. 
	Hence the volume of the $(d+1)$-simplex $P$ with vertices $(0,s_0,\ldots, s_d)$ is $\vol(P) = \frac{1}{(d+1)!}|\det(s_0,\ldots, s_d)|$. 
	Note that $P$ is a pyramid over $S$ of height $h=\frac{1}{\sqrt{d+1}}$. 
    Thus $\vol(P)=\frac{h}{d+1}\vol(S)$.
	The volume of the regular $d$-simplex is $\vol(\Delta) = \frac{\sqrt{d+1}}{d!}$. 
    The statement follows by combining these formulas.
\qed
\end{proof}

\begin{example}
  Generalizing Examples~\ref{ex:square-modes} and~\ref{ex:quadrangle-strong-modes}, let $G$ be the edge graph of an $n$-cube, such that $V=\{0,1\}^n$ and two points are adjacent if their Hamming distance is one. 

  a) If $\Ccal\subseteq V$ has cardinality $|\Ccal|=k$ and minimum distance $3$, 
then $\ol{\mathbf{S}}$ has $2^{n}$ vertices and volume $\vol(\ol{\mathbf{S}}) = 2^{-k n}\vol(\Delta)$, whereas $\ol{\mathbf{M}}$ has $k (2^n-1) + 2^n -k n$ vertices and volume $\vol(\ol{\mathbf{M}}) = \frac{|\Sigma|}{2^n!}\vol(\Delta) \geq k! 2^{-kn} \vol(\Delta)$. 

  b) If $\Ccal$ is the set of all even-parity strings, then $\ol{\mathbf{S}}$ has $2^n$ vertices and volume $\vol(\ol{\mathbf{S}}) = (n+1)^{-2^{n-1}}\vol(\Delta)$, 
whereas $\ol{\mathbf{M}}$ has $2^{2^{n-1}} - 1 + 2^{n-1}$ vertices and volume $\vol(\ol{\mathbf{M}}) = \frac{|\Sigma|}{2^n!}\vol(\Delta) \geq {\binom{2^n}{2^{n-1}}}^{-1}\vol(\Delta)$. 
For $n=2$ and $n=3$ we have $|\Sigma| = 4$ and $|\Sigma| = 720$. 
The next open case is $n=4$. 
\end{example}

\bibliographystyle{abbrv}
\bibliography{referenzen}{}

\end{document}